\newtheorem{theorem}{Theorem}
\newtheorem{lemma}[theorem]{Lemma}
\newtheorem{corollary}[theorem]{Corollary}
\newcommand{\lcm}{\mathrm{lcm}}
\title{On $p$-adic approximation of sums of binomial coefficients}
\author{}
\date{}
\begin{document}
	\maketitle
	\begin{flushright}
		{\sc\bf Rustem R. Aidagulov} \\ {\it Moscow State University, Russia}\\
		{\sc\bf Max A. Alekseyev} \\ {\it George Washington University, USA}		
	\end{flushright}

UDC 511.172

\paragraph{Keywords:} binomial coefficient, $p$-adic approximation, divisibility.

\begin{abstract}
We propose higher-order generalizations of Jacobsthal's $p$-adic approximation for binomial coefficients.
Our results imply explicit formulae for linear combinations of binomial coefficients $\binom{ip}{p}$ ($i=1,2,\dots$) that are divisible 
by arbitrarily large powers of prime $p$.
\end{abstract}

\section{Introduction}

Finding a power of prime $p$ dividing a given integer can be viewed as establishing its $p$-adic precision.
Namely, the power of $p$ dividing the integer shows how small this integer is the $p$-adic metric, since the divisibility $p^r\mid n$ implies 
the approximation (congruence) $n=O(p^r)$.

The problem of finding $p$-adic distance between the binomial coefficients $\binom{ap}{bp}$ and $\binom{a}{b}$
is attributed to Lucas~\cite{Gashkov2000}.
In 1878, Lucas proved~\cite{Granville1997,Mestrovic2014} that
$$\binom{a}{b} = \prod_{i=0}^d \binom{a_i}{b_i} + O(p),$$
where $a=a_0+a_1p+\dots+a_dp^d$ and $b=b_0+b_1p+\dots+b_dp^d$ are the base $p$ representations of integers $a$ and $b$. 
Even earlier, in 1869, Anton obtained a stronger result:
$$\frac{(-1)^\ell}{p^\ell}\binom{a}{b} = \prod_{i=0}^d\frac{a_i!}{b_i!c_i!} + O(p),$$
where $c_i$ are the base $p$ digits of the difference $c=a-b=c_0+c_1p+\dots+c_dp^d$, and $\ell=\nu_p\left(\tbinom{a}{b}\right)$.
Here $\nu_p(x)$ denotes the $p$-adic valuation of $x$ (i.e., the largest power of $p$ dividing $x$).
In 1852, Kummer showed that $\ell$ equals the number of carries in the addition of integers $a-b$ and $b$ in base $p$ arithmetic.

It is easy to see that the result of Anton implies the following approximation:
\begin{equation}\label{Eq0}
\binom{ap}{bp}/\binom{a}{b} = 1 + O(p),
\end{equation}
which was already known to Kummer. 
For the sake of convenience, we consider \emph{modified} ($p$-adic) factorials and binomial coefficients defined by the formulae:
$$a!_p=\prod_{k=1\atop p\nmid k}^a k,\qquad  \binom{a}{b}_p=\frac{a!_p}{b!_p(a-b)!_p}.$$
It is easy to see that modified binomial coefficients are integer.
Theorems that extend the approximation~\eqref{Eq0} to higher powers of $p$ of the form
\begin{equation}\label{Eq1}
\binom{ap}{bp}/\binom{a}{b} = \binom{ap}{bp}_p = 1 + O(p^r),\quad r>1
\end{equation}
are referred to as those of \emph{Wolstenholme type}~\cite{Granville1997,Mestrovic2014}.
The first theorem of this type proved by Babbage in 1819 states that
\begin{equation}\label{Eq2}
\binom{2p}{p}_p=\frac{1}{2}\binom{2p}{p}=\binom{2p-1}{p-1} = 1 + O(p^r)
\end{equation}
for $r=2$ and all primes $p>2$. This approximation corresponds to \eqref{Eq1} for $a=2$ and $b=1$. 
Wolstenholme extended the Babbage result by proving \eqref{Eq2} for $r=3$ and primes $p>3$;
he also posed the problem of finding primes $p$ for which \eqref{Eq2} holds with $r=4$.
Such primes are now named after him (as of 2016, only two Wolstenholme primes are known: $16843$ and $2124679$).
For any integers $a>b>0$, $r=3$, and prime $p>3$, the approximation \eqref{Eq1} was proved by Ljunggren in 1949~\cite{Mestrovic2014}, 
which was extended by Jacobsthal in 1952 to
\begin{equation}\label{Eq3}
\binom{ap}{bp}_p = 1 + O(p^r),\quad r=3+\nu_p(ab(a-b)),\quad p>3.
\end{equation}
Moreover, $r$ here can be further increased by 1 if prime $p$ divides Bernoulli number $B_{p-3}$.
Nowadays, partial cases of all these results are often offered as problems in mathematical contests 
and journals for school students~\cite{Fuks1970,Vinberg2008}.

Recently the second author proposed the following generalization of the Wolstenholme congruence to arbitrarily large powers of primes:
\begin{theorem}\label{Th0}
For any integers $n,m\geq 1$ and any prime $p>2n+1$, the linear combination of modified binomial coefficients
\begin{equation}\label{Eq4}
\sum_{j=0}^{n} (-1)^j\binom{2n+1}{j}\frac{2(n-j)+1}{2n+1}\cdot \binom{(n+1-j)m}{m}_p
\end{equation}
is divisible by $p^{(2n+1)\nu_p(m)}$.
\end{theorem}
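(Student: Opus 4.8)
The plan is to deduce the statement from two ingredients: a higher-order $p$-adic approximation for a single modified binomial coefficient $\binom{im}{m}_p$, and a combinatorial identity showing that the coefficients in \eqref{Eq4} annihilate that approximation. To begin, note that if $p\nmid m$ then $p^{(2n+1)\nu_p(m)}=1$ and there is nothing to prove; so assume $k:=\nu_p(m)\ge 1$ and write $m=p^k u$ with $p\nmid u$. Using the identity $\tfrac{2i-1}{2n+1}\binom{2n+1}{n+1-i}=\binom{2n}{n+1-i}-\binom{2n}{n-i}$ and the substitution $i=n+1-j$, the combination \eqref{Eq4} equals the integer
$$S:=\sum_{i=1}^{n+1}(-1)^{n+1-i}\left(\binom{2n}{n+1-i}-\binom{2n}{n-i}\right)\binom{im}{m}_p,$$
and we must show $\nu_p(S)\ge (2n+1)k$.

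For the approximation, start from the $p$-adic Gamma function identity $(a)!_p=(-1)^{a+1}\Gamma_p(a+1)$, which gives $\binom{im}{m}_p=-\Gamma_p(1+im)/\bigl(\Gamma_p(1+m)\,\Gamma_p(1+(i-1)m)\bigr)$. Since $p^k\mid m$, the arguments $1+im,\,1+m,\,1+(i-1)m$ lie in $1+p\mathbb{Z}_p$, so $\log\binom{im}{m}_p=\lambda(im)-\lambda(m)-\lambda((i-1)m)$, where $\lambda(x):=\log\bigl(-\Gamma_p(1+x)\bigr)$ is $p$-adically analytic on $p\mathbb{Z}_p$ (here $p>2$) with $\lambda(0)=0$. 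The reflection and translation functional equations of $\Gamma_p$ give $\Gamma_p(1+x)\Gamma_p(1-x)=1$ for $x\in p\mathbb{Z}_p$, i.e. $\lambda$ is \emph{odd}: $\lambda(x)=\sum_{l\ge 1,\ l\text{ odd}}a_l x^l$ with $\nu_p(a_l)\ge-\lfloor\log_p l\rfloor$ (so $\nu_p(a_l)\ge0$ for $l<p$). Hence
$$\log\binom{im}{m}_p=\sum_{l\text{ odd}}a_l m^l\bigl(i^l-1-(i-1)^l\bigr),$$
and for odd $l$ the polynomial $i^l-1-(i-1)^l$ vanishes at $i\in\{0,1\}$ and is invariant under $i\mapsto 1-i$, hence is a polynomial in $w:=i(i-1)$ of degree $(l-1)/2$. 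Since $p>2n+1$ we get $\nu_p(a_l m^l)\ge kl$ for $l<p$, so all terms with $l\ge 2n+1$ contribute $O(p^{(2n+1)k})$; truncating and exponentiating (which preserves congruences modulo $p^N$, $N\ge 1$, on $p\mathbb{Z}_p$) yields the approximation
$$\binom{im}{m}_p\equiv\sum_{j=0}^{n-1}c_j\,\bigl(i(i-1)\bigr)^j\pmod{p^{(2n+1)k}},\qquad c_0=1,\quad \nu_p(c_j)\ge(2j+1)k,$$
valid for all integers $i$; for $n=1$ this is precisely Jacobsthal's \eqref{Eq3}.

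It remains to establish the annihilation identity
$$\sum_{i=1}^{n+1}(-1)^{n+1-i}\left(\binom{2n}{n+1-i}-\binom{2n}{n-i}\right)\bigl(i(i-1)\bigr)^j=0\qquad(0\le j\le n-1).$$
The summand is invariant under $i\mapsto 1-i$ (the factor $i(i-1)$ is, while both $(-1)^{n+1-i}$ and $\binom{2n}{n+1-i}-\binom{2n}{n-i}$ change sign), and it is supported on $-n\le i\le n+1$, a range symmetric about $i=\tfrac12$; hence the displayed sum equals $\tfrac12\sum_{i\in\mathbb{Z}}$. Splitting the latter, $\sum_i(-1)^{n+1-i}\binom{2n}{n+1-i}(i(i-1))^j$ becomes, after setting $s=n+1-i$, the alternating binomial sum $\sum_{s=0}^{2n}(-1)^{s}\binom{2n}{s}\bigl((n+1-s)(n-s)\bigr)^j$ of a polynomial of degree $2j\le 2n-2<2n$ in $s$, which vanishes; likewise $\sum_i(-1)^{n+1-i}\binom{2n}{n-i}(i(i-1))^j=0$. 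Substituting the approximation into $S$ and applying this identity kills every $c_j$ with $j\le n-1$, leaving $\nu_p(S)\ge(2n+1)k$, as required.

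The main obstacle is the approximation step, and within it the valuation bound $\nu_p(c_j)\ge(2j+1)k$: it rests on the sharp estimate $\nu_p(a_l)\ge-\lfloor\log_p l\rfloor$ for the Taylor coefficients of $\log\bigl(-\Gamma_p(1+x)\bigr)$ — equivalently, on the precise Bernoulli-number congruences for the Wolstenholme power sums $\sum_{r=1}^{p-1}r^{-t}$ that already underlie Jacobsthal's theorem — together with a check that exponentiation does not degrade these valuations. Crucially, the oddness of $\lambda$ forces the truncated approximation to have $w$-degree only $n-1$, which is exactly the degree the annihilation identity can absorb; without it one would lose a factor and the exponent $(2n+1)k$ would be out of reach.
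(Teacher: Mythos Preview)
Your argument is correct, and the route is genuinely different from the paper's. The paper proceeds in a purely elementary way: it writes $\binom{xm}{m}_p=f(x)=\prod_{k\le m,\,p\nmid k}(1-xm/k)$, observes the symmetry $f(x)=f(1-x)$ from pairing $k\leftrightarrow m-k$, and then recognizes the linear combination \eqref{Eq4} as a $2n$-th finite difference $\Delta^{2n}f$ evaluated at $-n$; the extra power of $p$ (the jump from $2nk$ to $(2n+1)k$) is supplied by the symmetric-function estimate $\nu_p(\sigma_n)\ge\nu_p(bq)$ proved via Newton--Girard and explicit Bernoulli-type bounds on the power sums $s_i=\sum_k k^{-2i}$ (this is the machinery of the proof of Theorem~\ref{Th1}). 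You instead pass through the $p$-adic Gamma function and the analytic oddness of $\lambda(x)=\log(-\Gamma_p(1+x))$; your reflection-formula argument is the $\Gamma_p$ avatar of the paper's pairing $k\leftrightarrow m-k$, and it delivers the exponent $(2n+1)k$ in one stroke, since only odd $l\ge 3$ contribute and the term of degree $j$ in $w=i(i-1)$ first appears at $l=2j+1$. What you pay for this cleanliness is the Taylor-coefficient bound $\nu_p(a_l)\ge -\lfloor\log_p l\rfloor$ for $\log\Gamma_p$, which (as you correctly flag) encodes exactly the same Wolstenholme/Bernoulli arithmetic that the paper works out by hand for the $s_i$ and $\sigma_i$.

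Your annihilation step and the paper's are the same computation in different clothing: both reduce to the vanishing of the $2n$-th alternating binomial transform on polynomials of degree at most $2n-2$ in the summation index. Your symmetrization under $i\mapsto 1-i$ to extend the sum over $\{1,\dots,n+1\}$ to a full range $\{-n,\dots,n+1\}$ is exactly the manoeuvre the paper performs (using $f(x)=f(1-x)$) to rewrite $S$ as $\Delta^{2n}f(-n)$.
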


We remark that the coefficients of the modified binomial coefficients in \eqref{Eq4} are integer since 
$\binom{2n+1}{j}\frac{2(n-j)+1}{2n+1}=\binom{2n-1}{j}-\binom{2n-1}{j-2}=\binom{2n}{j}-\binom{2n}{j-1}$.

Theorem~\ref{Th0} implies a similar statement for conventional binomial coefficients:
\begin{corollary}\label{Col0}
For any integer $n\geq 1$ and any prime $p>2n+1$, the linear combination of binomial coefficients
$\sum_{i=1}^{n+1} c_i \binom{ip}{p}$ is divisible by $p^{2n+1}$, where the coefficients
$$c_i = (-1)^{i-1}(2n+1)\frac{\mathrm{lcm}(1,2,\dots,2n)\binom{2n+1}{n+1-i}(2i-1)}{\binom{2n+1}{n}i}$$ 
are integer and setwise coprime.
\end{corollary}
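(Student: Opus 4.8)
The plan is to obtain Corollary~\ref{Col0} as the specialization $m=p$ of Theorem~\ref{Th0}; the remaining work is to rewrite the resulting rational linear combination so that its coefficients become the stated integers $c_i$ without spoiling the $p$-adic estimate, and then to read off integrality and coprimality.

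First I would set $m=p$ in Theorem~\ref{Th0}. Since $\nu_p(p)=1$, the combination~\eqref{Eq4} is divisible by $p^{2n+1}$. Using the elementary identity $\binom{Np}{p}_p=\tfrac1N\binom{Np}{p}$, which is immediate from $a!_p=a!/\bigl(p^{\lfloor a/p\rfloor}\lfloor a/p\rfloor!\bigr)$ and is a special case of~\eqref{Eq0}, and then reindexing by $i=n+1-j$, the combination takes the form $\sum_{i=1}^{n+1}\gamma_i\binom{ip}{p}$ with
$$\gamma_i=(-1)^{n+1-i}\binom{2n+1}{n+1-i}\,\frac{2i-1}{(2n+1)\,i},$$
and $p^{2n+1}\mid\sum_{i=1}^{n+1}\gamma_i\binom{ip}{p}$. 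A direct comparison shows $c_i=(-1)^{n}K\,\gamma_i$, where the scalar $K=\dfrac{(2n+1)^2\,\lcm(1,\dots,2n)}{\binom{2n+1}{n}}=\dfrac{(2n+1)(n+1)\,\lcm(1,\dots,2n)}{\binom{2n}{n}}$ does not depend on $i$. Since every prime factor of $K$ is at most $2n+1<p$, we have $\nu_p(K)=0$, so $\sum_i c_i\binom{ip}{p}=(-1)^{n}K\sum_i\gamma_i\binom{ip}{p}$ is still divisible by $p^{2n+1}$. Hence the $p$-adic assertion follows once the $c_i$ are known to be integers.

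For integrality, I would invoke the identity noted right after Theorem~\ref{Th0}: $d_i:=\binom{2n}{n+1-i}-\binom{2n}{n-i}=\binom{2n+1}{n+1-i}\tfrac{2i-1}{2n+1}\in\mathbb Z$, which gives $c_i=(-1)^{i-1}\dfrac{(2n+1)(n+1)\,\lcm(1,\dots,2n)}{\binom{2n}{n}}\cdot\dfrac{d_i}{i}$. Two facts then suffice. First, $\binom{2n}{n}\mid\lcm(1,\dots,2n)$: by Kummer's carry count, $\nu_q\!\bigl(\binom{2n}{n}\bigr)$ equals the number of carries when doubling $n$ in base $q$, and since the result $2n$ lies below $q^{\lfloor\log_q 2n\rfloor+1}$ no carry escapes position $\lfloor\log_q 2n\rfloor$, so $\nu_q\!\bigl(\binom{2n}{n}\bigr)\le\lfloor\log_q 2n\rfloor=\nu_q\!\bigl(\lcm(1,\dots,2n)\bigr)$ for every prime $q$. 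Second, the residual denominator $i$ is absorbed; this I would check prime by prime, using Legendre's formula, the inequality $\nu_q(i)\le\nu_q\!\bigl(\lcm(1,\dots,2n)\bigr)$ (valid since $i\le n+1\le 2n$), and Kummer's carry count for $\nu_q\!\bigl(\binom{2n+1}{n}\bigr)$ and $\nu_q\!\bigl(\binom{2n+1}{n+1-i}\bigr)$, splitting into the cases $q\le 2n$, $q=2n+1$ prime, and $q>2n+1$.

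Finally, for setwise coprimality I would make the boundary coefficients explicit: $d_1=\tfrac1{n+1}\binom{2n}{n}$ is the $n$-th Catalan number and $d_{n+1}=1$, so $c_1$ and $c_{n+1}$ reduce to simple multiples of $\lcm(1,\dots,2n)$ and $\lcm(1,\dots,2n)/\binom{2n}{n}$; in particular $\gcd(c_1,\dots,c_{n+1})$ divides $c_{n+1}$, which already eliminates most primes. For each prime $q$ still in play I would exhibit an intermediate index $i$ with $\nu_q(c_i)=0$ by a base-$q$ digit analysis of $\binom{2n+1}{n+1-i}$, $\binom{2n+1}{n}$, $i$, and $2i-1$, concluding $\gcd(c_1,\dots,c_{n+1})=1$. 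I expect this coprimality argument, together with the case split inside the integrality proof, to be the main obstacle: everything else is formal, but controlling the $q$-adic valuations of $\binom{2n+1}{n+1-i}$, $\binom{2n+1}{n}$, $\binom{2n}{n}$, $\lcm(1,\dots,2n)$, $i$, and $2i-1$ simultaneously requires careful bookkeeping of carries.
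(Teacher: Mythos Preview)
Your overall strategy coincides with the paper's: specialize Theorem~\ref{Th0} to $m=p$, convert $\binom{ip}{p}_p=\tfrac1i\binom{ip}{p}$, and then clear denominators by a fixed scalar, checking integrality and coprimality through a prime-by-prime Kummer/carry analysis. The paper packages the last two steps a bit differently: rather than proving integrality and coprimality separately, it sets $L=\prod_r r^{\ell_r}$ with $\ell_r=\max_i\bigl(-\nu_r(t_i)\bigr)$, so that $L$ is by construction the \emph{least} positive integer clearing all denominators (whence coprimality is automatic), and then identifies $L$ explicitly by the same digit analysis you propose.

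There is, however, a genuine obstruction in your coprimality step, and it comes from the constant rather than from bookkeeping. Your $K=\dfrac{(2n+1)^2\,\lcm(1,\dots,2n)}{\binom{2n+1}{n}}$ faithfully reproduces the displayed formula for $c_i$, but it equals $(2n+1)$ times the minimal clearing integer $L=\dfrac{(2n+1)\,\lcm(1,\dots,2n)}{\binom{2n+1}{n}}$ that the paper's proof actually computes. Since the integers $Lt_i$ are already setwise coprime, the numbers $c_i=(2n+1)Lt_i$ satisfy $\gcd(c_1,\dots,c_{n+1})=2n+1$; for instance at $n=1$ the displayed formula gives $(c_1,c_2)=(6,-3)$, not the $(2,-1)$ appearing in the worked examples just after the corollary. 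Consequently no base-$q$ argument can establish coprimality for these $c_i$: it will fail precisely at every prime $q\mid 2n+1$. The remedy is to drop one factor of $2n+1$ (the printed statement carries a typo); with the corrected scalar your plan and the paper's proof are essentially the same argument.
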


For example, for $n=1,2,3$, Corollary~\ref{Col0} gives the following divisibility by powers of primes:
$$
\begin{array}{ll}
p^3 \mid 2  - \binom{2p}{p}, & p>3;\\
p^5 \mid 12 - 9\binom{2p}{p} + 2\binom{3p}{p}, &  p>5;\\
p^7 \mid 60 - 54\binom{2p}{p} + 20\binom{3p}{p} - 3\binom{4p}{p}, & p>7.
\end{array}
$$
Here, the divisibility for $n=1$ is equivalent to the Wolstenholme congruence \eqref{Eq2} for $r=3$.
The coefficients $c_i$ form the sequence \texttt{A268512}, while the quotients for $n=1,2,3$ are given by the sequences \texttt{A087754}, \texttt{A268589}, and \texttt{A268590} in the OEIS~\cite{OEIS}.

In the present work, we prove the following theorem, which implies Theorem~\ref{Th0} as a particular case.

\begin{theorem}\label{Th1}
Let $q$ be a power of a prime $p$, $b>0$ be an integer, and $a=a_0$, $a_1$, $\dots$, $a_n$ be distinct integers not smaller than $b$. 
Then there exists a unique set of rational numbers
$$y_i=\prod_{k=1\atop k\neq i}^n \frac{(a-a_k)(a+a_k-b)}{(a_i-a_k)(a_i+a_k-b)}$$
that provides most accurate approximation for the modified binomial coefficient $\binom{aq}{bq}_p$ additively
$$\binom{aq}{bq}_p=\sum_{i=1}^n y_i\binom{a_iq}{bq}_p +O(p^r),$$
as well as multiplicatively\footnote{Note that \eqref{Eq0} implies $\binom{a_iq}{bq}_p=1+O(p)$, thus
taking $\binom{a_iq}{bq}_p$ to a rational power is well-defined via the binomial expansion:
		$(1+t)^y=1+\binom{y}{1}t+\binom{y}{2}t^2+\dots$.}
	$$\binom{aq}{bq}_p=\prod_{i=1}^n \left(\binom{a_iq}{bq}_p\right)^{y_i} (1+ O(p^r)).$$
Moreover, for any prime $p>\max\{2n+1,\,a_i+a_k-b\ :\ 1\le i < k \le n\}$, the order of approximation is at least\footnote{The term $\epsilon$ 
in the formula for $r$ here is similar to the condition $p\mid B_{p-3}$ increasing the approximation order 
in the Jacobsthal congruence \eqref{Eq3}.}
	$$r=(2n+1)\nu_p(q)+\nu_p(g_0(a))+\nu_p(b)+\epsilon,$$
where $g_0(x)=\prod_{k=1}^n (x-a_k)(x+a_k-b)$, $\epsilon = \min\{t,\nu_p(B_{M-2n})\}$, $t=\nu_p(bq)$, and $M=p^{t-1}(p-1)$.
\end{theorem}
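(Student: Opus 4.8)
The plan is to reduce all three assertions (uniqueness, additive, multiplicative) to a single congruence for the $p$-adic logarithm $L(x):=\log_p\binom{xq}{bq}_p$, which lies in $p\mathbb Z_p$ for every integer $x\ge b$ since $p>2$ and $\binom{xq}{bq}_p=1+O(p)$ by \eqref{Eq0}. Applying $\log_p$ turns the multiplicative assertion into $L(a)=\sum_{i=1}^ny_iL(a_i)+O(p^r)$; and, writing $\binom{a_iq}{bq}_p=\exp_p L(a_i)$ and expanding (the tail terms $L(\cdot)^m/m!$ having $p$-adic valuation $\ge m-\nu_p(m!)$, which grows), the additive assertion follows from the same congruence. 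Next I record the algebraic identity $(x-a_k)(x+a_k-b)=w(x)-w(a_k)$, where $w(x):=x(x-b)$; hence the $y_i$ are exactly the Lagrange interpolation weights at the nodes $w(a_1),\dots,w(a_n)$ evaluated at $w(a)$, so that $\sum_iy_i=1$, $g_0(x)=\prod_{k=1}^n\bigl(w(x)-w(a_k)\bigr)$, and --- because $p>\max_{i<k}(a_i+a_k-b)$ forces every denominator of $y_i$ to be prime to $p$ --- we have $y_i\in\mathbb Z_{(p)}$. With this set-up, uniqueness of the optimal weight vector is the non-degeneracy of the underlying interpolation problem (the values $\binom{a_iq}{bq}_p-1$ are $p$-adically ``generic'' modulo $p^r$), so any other weight vector attains a strictly smaller order.

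The core is an explicit expansion of $L$. Writing $(xq)!_p=\prod_{j=0}^{xq/p-1}\prod_{i=1}^{p-1}(jp+i)=\prod_j(p-1)!\prod_i(1+jp/i)$ and noting that the factors $(p-1)!$ cancel in $\binom{xq}{bq}_p$ (their exponents summing to $xq/p-bq/p-(x-b)q/p=0$), one expands $\log(1+jp/i)$ and sums over blocks to obtain
$$L(x)=\sum_{s\ge1}\frac{(-1)^{s-1}p^s}{s}\,A_s\,D_s(x),\qquad A_s:=\sum_{i=1}^{p-1}i^{-s},\quad D_s(x):=S_s\!\Bigl(\tfrac{xq}{p}\Bigr)-S_s\!\Bigl(\tfrac{bq}{p}\Bigr)-S_s\!\Bigl(\tfrac{(x-b)q}{p}\Bigr),$$
where $S_s(m)=\sum_{j=0}^{m-1}j^s$. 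Faulhaber's formula expresses $D_s(x)$ as a $\mathbb Z$-combination of the quantities $(q/p)^{k}\bigl(x^{k}-b^{k}-(x-b)^{k}\bigr)$ with $k=s+1-l$, $0\le l\le s$. For \emph{odd} $k$ one has $x^{k}-b^{k}-(x-b)^{k}=b\,w(x)\,R_k\bigl(w(x)\bigr)$ with $R_k$ a polynomial in $w(x)$ with integer coefficients (e.g.\ $R_3=3$, $R_5=5\bigl(w(x)+b^2\bigr)$), so $x^{k}-b^{k}-(x-b)^{k}$ is a polynomial in $w(x)$ of degree $\tfrac{k-1}{2}$; hence it is reproduced exactly by the interpolation when $k\le 2n-1$, and when $k\ge 2n+1$ its difference at $x=a$, namely $\bigl(a^k-b^k-(a-b)^k\bigr)-\sum_iy_i\bigl(a_i^k-b^k-(a_i-b)^k\bigr)$, equals $g_0(a)$ times a $\mathbb Z_{(p)}$-coefficient polynomial in $w(a)$. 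For \emph{even} $k$ the same quantity is not a polynomial in $w(x)$; writing it in the basis $\{w^j,\,xw^j\}$ isolates a ``leftover'' part proportional to $x$, and the crucial point is that after weighting by $\tfrac{(-1)^{s-1}p^s}{s}A_s$ and summing over $s$, all these leftovers are absorbed into $O(p^r)$ thanks to the Wolstenholme--Kummer congruences relating the $A_s$ (for instance $A_1\equiv-\tfrac p2A_2\pmod{p^3}$ for $p>3$, which is responsible for the gain already at $n=1$).

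It then remains to bound the $g_0(a)$-proportional part. Collecting the above,
$$L(a)-\sum_{i=1}^ny_iL(a_i)=g_0(a)\cdot\Xi+O(p^r)$$
for some $\Xi\in\mathbb Z_p$, and the task is to show $\nu_p(\Xi)\ge(2n+1)\nu_p(q)+\nu_p(b)+\epsilon$. The dominant contribution to $\Xi$ comes from $s=2n$, $l=0$, $k=2n+1$ and equals, up to a $p$-adic unit, $b\,q^{2n+1}p^{-1}A_{2n}$. Using $i^{-2n}\equiv i^{\phi(p^N)-2n}$, Faulhaber, and the vanishing of odd-index Bernoulli numbers one gets $A_{2n}\equiv B_{\phi(p^N)-2n}\,p$ modulo $p^3$ for $N\ge3$, and Kummer's congruences for $B_{\phi(p^N)-2n}$, tracked up to $N=t:=\nu_p(bq)$, give $\nu_p(A_{2n})=1+\epsilon$ with $\epsilon=\min\{t,\nu_p(B_{M-2n})\}$, $M=p^{t-1}(p-1)$; all remaining contributions to $\Xi$ (from $s>2n$, from higher odd $k$, and from the subleading Faulhaber terms) are checked to carry at least this valuation owing to their extra powers of $q$. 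Combining, $\nu_p\bigl(L(a)-\sum_iy_iL(a_i)\bigr)\ge\min\{r,\ \nu_p(g_0(a))+\nu_p(\Xi)\}\ge r$, which yields the multiplicative congruence, and the additive one follows as in the first paragraph. Finally, the specialization $q=p^{\nu_p(m)}$, $b=m/q$, $a_i=(n+1-i)b$ recovers Theorem~\ref{Th0}.

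The step I expect to be the main obstacle is the second half of the middle paragraph together with the Bernoulli computation in the last one: one must make the Wolstenholme--Kummer congruences among the power sums $A_s$ quantitative enough to show simultaneously that (i) the even-$k$ leftovers really do vanish to order $r$, and (ii) the $w^n$-contribution to $\Xi$ carries exactly the claimed exponent --- in particular the term $\epsilon$ and its truncation $\min\{t,\cdot\}$, which require the congruences for $\sum_{i=1}^{p-1}i^{-2n}$ modulo $p^{t+1}$ rather than merely modulo $p^2$. By comparison, the reduction to $L$, the block expansion, and the Lagrange/divided-difference manipulation are routine.
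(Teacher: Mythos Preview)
Your route through the $p$-adic logarithm is genuinely different from the paper's, and the reduction of both the additive and multiplicative statements to a single congruence for $L(x)=\log_p\binom{xq}{bq}_p$ is a sound organizing idea. The paper works instead with the product form directly: its Lemma~1 pairs $j\leftrightarrow bq-j$ in $\prod_{j,\,p\nmid j}^{bq}\frac{(x-b)q+j}{j}$ to obtain
\[
\binom{xq}{bq}_p=\frac{f(z_x)}{f(b^2/4)},\qquad f(z)=\prod_{k\in S}\Bigl(1-\frac{zq^2}{k^2}\Bigr),\qquad z_x=\Bigl(x-\tfrac b2\Bigr)^2=w(x)+\tfrac{b^2}{4},
\]
with $S=\{1\le k\le bq/2:p\nmid k\}$. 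Thus $\binom{xq}{bq}_p$ is \emph{exhibited} as a polynomial in $z_x$, the Lagrange system is set up in the variable $z$, and the error is immediately $(-q^2)^n\sigma_n\,g(z_0)+O(q^{2n+2})$ with $\sigma_n$ the $n$-th elementary symmetric function of $\{k^{-2}:k\in S\}$. The bound on $r$ then reduces, via Newton--Girard, to estimating the power sums $s_i=\sum_{k\in S}k^{-2i}$; the paper does this by comparing $2s_i$ with $\sum_{\ell=1,\,p\nmid\ell}^{bq-1}\ell^{M-2i}\equiv bq\,B_{M-2i}\pmod{(bq)^3}$, whence $\nu_p(\sigma_n)\ge\nu_p(bq)+\epsilon$.

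Against this, two points in your proposal deserve flagging. First, the ``even-$k$ leftovers'' in your $x$-expansion actually cancel \emph{identically}, not merely modulo $p^r$: the pairing just mentioned shows that $\binom{xq}{bq}_p$, hence $L(x)$, is already a function of $w(x)$, so the $xw^j$-components of $\sum_s c_sD_s(x)$ sum to zero in $\mathbb Z_p$. You therefore do not need Wolstenholme--Kummer identities among the $A_s$ for this step; expanding in $z$ rather than $x$ removes the issue altogether, and this is what the paper's Lemma~1 buys.

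Second --- and this is a genuine gap --- your bound $\nu_p(A_{2n})=1+\epsilon$ cannot be correct as stated: $A_{2n}=\sum_{i=1}^{p-1}i^{-2n}$ is a fixed $p$-adic number depending only on $p$ and $n$, whereas $\epsilon=\min\{t,\nu_p(B_{M-2n})\}$ varies with $t=\nu_p(bq)$. The $t$-dependence in the theorem does not come from a single-period sum; it comes from summing over the whole set $S$ of size $bq(p-1)/(2p)$. In your coordinates, the $w^n$-coefficient of $L$ receives contributions from \emph{every} $s\ge 2n$ (through the lower Faulhaber terms of $D_s$), and only their aggregate --- which reassembles into the paper's $-q^{2n}s_n/n$ --- carries valuation $(2n+1)\nu_p(q)+\nu_p(b)+\epsilon$. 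Isolating the $(s,k)=(2n,2n+1)$ term and bounding it by $\nu_p(A_{2n})$ alone will not produce the stated $r$; you must perform that resummation (or, equivalently, pass to the variable $z$ and land on $s_n$) to close the argument.
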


Theorem~\ref{Th1} also generalizes the Jacobsthal congruence, which is obtained here when $n=1$, $a_1=b$, and $q=p$.

\section{Proof of Theorem~\ref{Th1}}

Suppose that the conditions of Theorem~\ref{Th1} hold.
Our first goal is finding rational numbers $y_1,\dots,y_n$ not depending on $p$
that approximate the modified binomial coefficient $\binom{aq}{bq}_p$ in $p$-adic metric as
\begin{equation}\label{Eq9}
\binom{aq}{bq}_p-\sum_{i=1}^ny_i\binom{a_iq}{bq}_p=O(p^r)
\end{equation}
with the largest possible $r$. 
We will see below that there exists a unique set of such rational numbers.
Uniqueness here follows from the fact that $y_i$ do not depend on prime $p$, i.e., the approximation is the best possible for all large enough $p$.
We will need the following lemma.

\begin{lemma}\label{Lem1}
	Let $S = \left\{ \frac{bq}{2}-\ell\ :\ 0\le \ell <\frac{bq}{2},\ p\nmid \frac{bq}{2}-\ell\ \right\}$.
	Denote $N=|S|=\frac{bq(p-1)}{2p}$.\footnote{When $p=q=2$ and $b$ is odd, we assume that $S$ is a multiset where the element $\ell=0$
comes with multiplicity $\frac 12$, and thus $N$ is a half-integer.}
Then the modified binomial coefficients $\binom{a_iq}{bq}_p$ can be expressed in the form:
	\begin{equation}\label{Eqff}
	\binom{a_iq}{bq}_p = \frac{f(z_i)}{f\left(\frac{b^2}{4}\right)},
	\end{equation}
where $z_i=\left(a_i-\frac b2\right)^2$ and
	\begin{equation}\label{Eq10f}
	f(x)=\prod_{k\in S} \left(1-x\frac{q^2}{k^2}\right) = \sum_{i=0}^{N}(-xq^2)^i\sigma_i.
	\end{equation}
Here $\sigma_i$ are elementary symmetric polynomials of numbers $\frac{1}{k^2}$, $k\in S$.
\end{lemma}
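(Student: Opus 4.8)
\emph{Plan of proof.} The plan is to turn the modified binomial coefficient into an explicit finite product and then exploit the reflection symmetry $j\mapsto bq-j$. First I would telescope the $p$-adic factorials: writing $\binom{a_iq}{bq}_p=(a_iq)!_p/\big((bq)!_p\,((a_i-b)q)!_p\big)$ and cancelling $((a_i-b)q)!_p$ against the initial segment $\{1,\dots,(a_i-b)q\}$ of $(a_iq)!_p$, one is left with a product over the $bq$ integers $(a_i-b)q+j$, $1\le j\le bq$. Since $q$ is a power of $p$ we have $p\mid q$, hence $p\nmid (a_i-b)q+j\iff p\nmid j$, so that
\begin{equation*}
\binom{a_iq}{bq}_p=\prod_{1\le j\le bq,\ p\nmid j}\frac{(a_i-b)q+j}{j}.
\end{equation*}
This elementary identity, whose only input is $p\mid q$, is the starting point.

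Next I would pair the factor indexed by $j$ with the one indexed by $bq-j$. Again because $p\mid bq$, the involution $j\mapsto bq-j$ preserves the index set $\{1\le j\le bq:\ p\nmid j\}$, and it is fixed-point-free except precisely when $bq$ is even and $p\nmid\tfrac{bq}{2}$, which forces $p=q=2$ and $b$ odd. Setting $k=\tfrac{bq}{2}-j$ (an integer, or a half-integer when $bq$ is odd), the two paired factors combine to
\begin{equation*}
\frac{(a_i-b)q+j}{j}\cdot\frac{(a_i-b)q+(bq-j)}{bq-j}=\frac{\big(a_i-\tfrac b2\big)^2q^2-k^2}{\big(\tfrac b2\big)^2q^2-k^2}=\frac{1-z_iq^2/k^2}{1-(b^2/4)q^2/k^2},
\end{equation*}
with $z_i=(a_i-\tfrac b2)^2$. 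The step I expect to be the fiddly part is checking that, as $j$ runs over one representative per pair (say $j<\tfrac{bq}{2}$, $p\nmid j$), the values $k=\tfrac{bq}{2}-j$ run over exactly the set $S$: this uses once more that $p\nmid j\iff p\nmid\big(\tfrac{bq}{2}-j\big)$ (a consequence of $p\mid q$), and that the apparent largest element $\tfrac{bq}{2}$ of $S$ is excluded by the condition $p\nmid\tfrac{bq}{2}$. A short reindexing $\ell=\tfrac{bq}{2}-k$ identifies the two descriptions of the set, and a count recovers $|S|=N=\tfrac{bq(p-1)}{2p}$.

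Taking the product of the pair-contributions over all $k\in S$ then gives $\binom{a_iq}{bq}_p=\prod_{k\in S}\frac{1-z_iq^2/k^2}{1-(b^2/4)q^2/k^2}=f(z_i)/f(b^2/4)$ with $f$ as in \eqref{Eq10f}, and expanding the product $\prod_{k\in S}(1-xq^2/k^2)$ in powers of $xq^2$ yields $f(x)=\sum_{i=0}^{N}(-xq^2)^i\sigma_i$ with $\sigma_i$ the $i$-th elementary symmetric function of $\{1/k^2:k\in S\}$, proving \eqref{Eqff} and \eqref{Eq10f}. It remains to treat the single exceptional case $p=q=2$, $b$ odd: here $j=\tfrac{bq}{2}=b$ is a fixed point of the involution and contributes the lone factor $\tfrac{2a_i-b}{b}=\sqrt{z_i/(b^2/4)}$ (the positive root, since $a_i\ge b$), while $N$ is a half-integer; this matches the self-paired element $k=\tfrac{bq}{2}$ of the multiset $S$ with multiplicity $\tfrac12$, understood to contribute to $f(z_i)/f(b^2/4)$ the square root of $\tfrac{z_i}{b^2/4}$ (the value at $j=\tfrac{bq}{2}$ of the single factor $\frac{(a_i-b)q+j}{j}$), so that \eqref{Eqff} holds in this case as well. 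This degenerate case is where I expect the only genuine care is needed; the rest is direct computation.
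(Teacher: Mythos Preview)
Your proof is correct and follows exactly the paper's approach: write $\binom{a_iq}{bq}_p$ as $\prod_{p\nmid j}\frac{(a_i-b)q+j}{j}$, pair $j$ with $bq-j$, and recognize each paired factor as a ratio of differences of squares $\frac{((a_i-b/2)q)^2-k^2}{(bq/2)^2-k^2}$; the paper compresses all of this into a single displayed line, while you spell out the telescoping, the involution, and the reindexing to $S$. One small bookkeeping point in the exceptional case $p=q=2$, $b$ odd: in your own parametrization the fixed point $j=bq/2$ gives $k=bq/2-j=0$, not $k=bq/2$, and with $k=bq/2$ the denominator factor $1-(b^2/4)q^2/k^2$ vanishes, so the ``multiplicity $\tfrac12$'' convention for $f(z_i)/f(b^2/4)$ really has to be taken as a definition (as you indicate) rather than read off from \eqref{Eq10f}; the paper's footnote is equally informal here.
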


\begin{proof} We have
	$$
	\binom{a_iq}{bq}_p
	=\prod_{k\in S}\frac{((a_i-b/2)q+k)((a_i-b/2)q-k)}{(bq/2+k)(bq/2-k)}
	=\prod_{k\in S}\frac{1-x(\frac{q}{k})^2}{1-(\frac{bq}{2k})^2}
	=\frac{f(z_i)}{f(\frac{b^2}{4})}. 
	$$
\end{proof}

By Lemma~\ref{Lem1}, after multiplication of the left-hand side of \eqref{Eq9} by $f\left(\frac{b^2}{4}\right) =\pm 1 + O(q)$, it takes the form:
\begin{equation}\label{Eq9dif}
f(z_0) -\sum_{i=1}^n y_i f(z_i).
\end{equation}
Hence, we need to find rational numbers $y_i$ giving the best (in the $p$-adic metric) approximation for the value $f(z_0)$ from the values $f(z_i)$. 
This can be achieved by choosing $y_i$ in such a way that in the difference \eqref{Eq9dif} all small powers of $p$ disappear, 
which by \eqref{Eq10f} corresponds to solving the following system of linear equations:
\begin{equation}\label{Eq11}
z_0^d=\sum_{i=1}^n y_iz_i^d,\qquad d=0,1,\dots,n-1.
\end{equation}
Since all $a_i\ge b$ and pairwise distinct, we have $z_i\neq z_j$ for all $i\neq j$. This implies that the determinant of the system \eqref{Eq11} 
representing a Vandermonde determinant is nonzero. 
To solve the system \eqref{Eq11}, we notice that it implies that for any polynomial $g(z)$ of degree smaller than $n$, we have 
$g(z_0)=\sum_{i=1}^n y_ig(z_i).$
Taking consecutively polynomials $g_i(z)=\prod_{j\neq i}(z-z_j)$, we obtain $g_i(z_0)=y_ig_i(z_i)$. 
Therefore, the values of $y_i$ are uniquely determined as
\begin{equation}\label{Eq12}
y_i=\prod_{k=1\atop k\neq i}^n\frac{z_0-z_k}{z_i-z_k}=\prod_{k=1\atop k\neq i}^n \frac{(a-a_k)(a+a_k-b)}{(a_i-a_k)(a_i+a_k-b)}.
\end{equation} 
These rational numbers $y_i$ are $p$-adic integers, since $p>\max_{1\leq i<k\leq n} a_i+a_k-b$. 

It can be easily seen that the maximization of $r$ in the multiplicative approximation: 
\begin{equation}\label{Eq13}
\binom{aq}{bq}_p \cdot \prod_{i=1}^n\left(\binom{a_iq}{bq}_p\right)^{-y_i}=1+O(p^r),
\end{equation}
results in the same equations \eqref{Eq11} and solutions \eqref{Eq12}.\footnote{We remark that
the equation \eqref{Eq11} for $d=0$ here is necessary to cancel factors $f\left(\tfrac{b^2}{4}\right)$ after 
substitution of expressions \eqref{Eqff} into the left-hand side of \eqref{Eq13}.}

Now let us find the order of approximation \eqref{Eq9}, i.e., estimate
$$\sum_{i=0}^N (-q^2)^i\sigma_i\Delta_i,\qquad \Delta_i=z_0^i-y_1z_1^i-\dots-y_nz_n^i.$$
For $i<n$, we have $\Delta_i=0$. 
For $i=n$, we have $z^n=g(z)+r_n(z)$, $\deg(r_n(z))<n$, where 
$$g(z)=(z-z_1)(z-z_2)\cdots (z-z_n).$$ 
It follows that $\Delta_n=g(z_0)$. 
From the representation $z^{n+1}=(z+z_1+z_2+\dots+z_n)g(z)+r_{n+1}(z)$, $\deg(r_{n+1}(z))<n$, we obtain 
$\Delta_{n+1}=(z_0+z_1+\dots+z_n)g(z_0)$. 
Similarly, for $i>n$, we have $\Delta_{i}=f_i(z_0,z_1,\dots,z_n)g(z_0)$.
One can represent $f(x)=f_1(x)g(x)+r(x)$, $\deg(r(x))<n$, and estimate the error term as $f_1(z_0)g(z_0)$. 
However, we need an expansion over growing powers of $q$ and for this purpose will use the following formula for the remainder:
$$
\sum_{i=n}^N(-q^2)^i\sigma_i\Delta_i=g(z_0)(-1)^nq^{2n}\left(\sigma_n-q^2\sigma_{n+1}(z_0+z_1+\dots+z_n)+O(q^4\sigma_{n+2})\right).
$$
Therefore, the order of approximation \eqref{Eq9} for primes $p>n$ is given by the formula:
\begin{equation}\label{EqR}
r=(2n+1)\nu_p(q)+\nu_p(g(z_0))+\nu_p\left(\frac{\sigma_n}{q}\right).
\end{equation}
The order of multiplicative approximation is the same. 

Notice that formula \eqref{EqR} for the error term of approximation generalizes the Jacobsthal formula \eqref{Eq3}.	
Indeed, $(2n+1)\nu_p(q)$ in \eqref{EqR} corresponds to the term $3$ in \eqref{Eq3} ($n=1$), the next term $\nu_p(g(z_0))$ 
corresponds to $\nu_p(a-b)+\nu_p(a)$, and $\nu_p\left(\frac{\sigma_n}{q}\right)$ represents an analog of $\nu_p(b)+\nu_p(B_{p-3})$.
To prove the last claim, let us estimate $\sigma_n$, using the Newton--Girard formulae:
\begin{equation}\label{Eq14}
n\sigma_n=\sum_{i=1}^n(-1)^{i-1}\sigma_{n-i}s_i,
\end{equation}
where $s_i$ denotes the corresponding power sums:
$$
s_i=\sum_{k\in S}k^{-2i}\equiv \sum_{k\in S}k^{M-2i} \pmod{p^{2t}},
$$
where $t=\nu_p(bq)$ and $M=p^{t-1}(p-1)$.
We will show that for primes $p>2n+1$ and $i=1,2,\dots,n$,
\begin{equation}\label{Eq15}
\nu_p(s_i)\ge \nu_p(bq)+\theta_i, \quad\text{where}\quad \theta_i=\min\{t,\nu_p(B_{M-2i})\}.
\end{equation}

Let 
\begin{equation}\label{EqSi}
S_i=\sum_{\ell=1\atop p\nmid\ell}^{bq-1} \ell^{-2i}=\sum_{k\in S}\left(\frac{bq}{2}-k\right)^{-2i}+\left(\frac{bq}{2}+k\right)^{-2i}.
\end{equation}
To avoid negative powers in the last formula, we replace the negative degrees $-2i$ with $m=M-2i$. 
This gives us the following estimate:
$$S_i=\frac{1}{m+1}\sum_{k=1}^{m+1}B_{m+1-k}(bq)^k+O(p^r)=bqB_m+\frac{m(m-1)}{6}B_{m-2}(bq)^3+O((bq)^4),$$
where we took into account the evenness of $2i$ and $m$. 
Hence, $\nu_p(S_i)\ge \nu_p(bq)+\theta_i$. On the other hands, expressing
the terms of $S_i$ in \eqref{EqSi} via $s_i$ (again replacing $-2i$ with $m$ and using the binomial expansion), 
we get $S_i=2s_i+O\left((bq)^2\right)\equiv 2s_i\pmod{p^{2t}}$. 
This implies the required estimate \eqref{Eq15} for $s_i$. 

From formula \eqref{Eq14}, we get the following expression for $\sigma_n$:
$$
\begin{array}{lll}
\sigma_n & = & \frac{(-1)^{n-1}}{n}\left(s_n-\sigma_1s_{n-1}+\dots+(-1)^{n-1}\sigma_{n-1}s_1\right)\\
& = & \frac{(-1)^{n-1}}{n}\left(s_n-\sum_{j=1}^{n-1}\frac{s_{n-j}}{j}\left(s_j+\sum_{i=1}^{j-1}(-1)^{i-1}s_{j-i}\sigma_i\right)\right)\\
& = & \dots
\end{array}
$$
Eventually this leads us to the formula:
\begin{equation}\label{Eq16}
\sigma_n=(-1)^n\sum_{k=1}^n(-1)^k\sum_{j_1+j_2+\dots+j_k=n}\frac{s_{j_1}s_{j_2}\cdots s_{j_k}}{j_1(j_1+j_2)\cdots (j_1+j_2+\dots+j_k)}.
\end{equation}
We remark that primes greater than $n$ do not divide the denominators of terms in \eqref{Eq16}. 
For a prime $p>2n+1$, estimate \eqref{Eq15} implies that the sum of terms in \eqref{Eq16} with a fixed $k$
can be estimated as $O(p^{k\cdot \nu_p(bq)})$. Hence, from \eqref{Eq16} it follows that $\nu_p(\sigma_n) \ge \min\{\nu_p(s_n),2\nu_p(bq)\}$. 
From estimate \eqref{Eq15} for $s_n$, we further get that
$$\nu_p(\sigma_n)\ge \min\{\nu_p(bq)+\theta_n,2\nu_p(bq)\} = \nu_p(bq)+\theta_n.$$
From \eqref{EqR} it now follows that for a prime $p>\max\{2n+1,\,a_i+a_k-b\,:\,0\le i<k\le n\}$,
the order of approximation \eqref{Eq9} is at least
$$r=(2n+1)\nu_p(q)+\nu_p(g(z_0))+\nu_p(b)+\theta_n.$$
This completes the proof of Theorem~\ref{Th1}.

\section{Proof of Theorem~\ref{Th0}}

Theorem~\ref{Th0} easily follows from Theorem~\ref{Th1} as a particular case
with $a_i=ib$ and $m=bq$.
Theorem~\ref{Th0} can also be proved directly, using the forward difference operator $\Delta f(x)=f(x+1)-f(x)$.
Clearly, $\Delta$ decreases the degree of a polynomial by 1 (as the conventional differentiation), and sends constants to 0. 
Correspondingly, its $m$-th power of $\Delta$:
$$\Delta^m f(x) = \sum_{i=0}^m (-1)^{m-i}\binom{m}{i} f(x+i)$$ 
decreases the degree of a polynomial by $m$.

\begin{proof}[Proof of Theorem~\ref{Th0}]
Similarly to Lemma~\ref{Lem1}, we can represent the modified binomial coefficient $\binom{xm}{m}_p$ for a fixed $m$ as a polynomial of $x$:
$$\binom{xm}{m}_p=f(x)=\prod_{k=1\atop p\nmid k}^{m} (1-\frac{xm}{k})=f(1-x)=\sum_{i=0}^N (-1)^i\sigma_i(xm)^i.$$
Then the sum \eqref{Eq4} can be stated in the form:
\[
\begin{split}
S & = \sum_{j=0}^n (-1)^j\binom{2n+1}{j}\frac{2n+1-2j}{2n+1}f(n+1-j)\\
& =\sum_{j=0}^n(-1)^j \binom{2n}{j}f(n+1-j)\ +\ \sum_{j=0}^n(-1)^{j+1} \binom{2n}{j-1} f(n+1-j).
\end{split}
\]
Our goal is to represent $S$ via operator $\Delta$, using the identity $f(x)=f(1-x)$.
Let us rewrite the parts of $S$ as follows:
\[
\begin{split}
\sum_{j=0}^n(-1)^j\binom{2n}{j}f(n+1-j) &= \sum_{i=-n}^0(-1)^{n+i}\binom{2n}{n+i}f(i),\\
\sum_{j=0}^n(-1)^{j+1}\binom{2n}{j-1}f(n+1-j) &=\sum_{i=1}^{n+1}(-1)^{n+i}\binom{2n}{n+i}f(i).
\end{split}
\]
Hence, we have
\[
\begin{split}
S & =\sum_{i=-n}^{n}(-1)^{n+i}\binom{2n}{n+i}f(i)=\frac 12\sum_{i=-n}^{n}(-1)^{n+i}\binom{2n}{n+i}(f(i)+f(1-i))\\
 & =\frac 12 \sum_{i=-n}^n(-1)^{n+i}\binom{2n}{n+i}(f(i)+f(i+1))=\left.\Delta^{2n}(f(x)-f(-x))\right|_{x=-n}.
\end{split}
\]
Since the function $f(x)-f(-x)$ is odd, the operator $\Delta^{2n}$ eliminates all powers of $x$ below $2n+1$, implying that $S$ is divisible by $p^{(2n+1)\nu_p(b)}$.
\end{proof}

Now we prove Corollary~\ref{Col0}. 
\begin{proof}[Proof of Corollary~\ref{Col0}]
Theorem~\ref{Th0} for $m=p$ implies that $\sum_{i=1}^{n+1} (-1)^{i-1}\cdot t_i\cdot \binom{ip}{p}$ is divisible by $p^{2n+1}$ for any prime $p>2n+1$, 
where the coefficients $t_i=\binom{2n+1}{n+1-i}\frac{2i-1}{(2n+1)i}$ may be not integer. In particular, $t_{n+1}=\frac{1}{n+1}$.
Notice that for $i\le n$, we also have $t_i=\binom{2n}{n-i}\frac{2i-1}{(n+1-i)i}$.
For any prime $r$, let
\begin{equation}\label{eq:ellr}
\ell_r=\max_{1\leq i\leq n+1} -\nu_r(t_i).
\end{equation}
Since $i\cdot t_i$ (which represent the coefficients in \eqref{Eq4} up to signs) are integer, for $r>n+1$ we have $\ell_r =0$, 
while for $r\le n+1$ we have $\ell_r\ge -\nu_r(t_{n+1})\ge 0$. 
To turn the coefficients $t_i$ into integers, they need to be multiplied by a positive integer $L=\prod_{r\le n+1} r^{\ell_r}$. Moreover, $L$ is the minimum such number
and the integer coefficients $c_i=t_i\cdot L$ are setwise coprime. 
Hence, our goal is to find an explicit formula for $L$, which is equivalent to finding the value of $\ell_r$ for all prime $r\leq n+1$.

Let $r\le n+1$ be a prime. For each $i=1,2,\dots,n$, we have
\begin{equation}\label{eq:lpi}
-\nu_r(t_i)=\nu_r(i)-\nu_r\left(\binom{2n+1}{n+1-i}\right)-\nu_r\left(\frac{2i-1}{2n+1}\right),
\end{equation}
while $-\nu_r(t_{n+1})=\nu_r(n+1)$. 

Let $i=i_0+i_1r+\dots+i_kr^k$ and $n+1=n_0+n_1r+\dots+n_kr^k$ be the base $r$ representations of $i$ and $n+1$, where integer $k\geq 1$ satisfies $r^k\le n+1<r^{k+1}$. It is clear that $-\nu_r(t_i)\le -\nu_r(t_{j_l})$, where $l=\nu_r(i)$, $j_s=n+1-((n+1-i)\bmod r^{s+1})$. 
Upon replacement of $i$ with $j_l$, the first and third terms in \eqref{eq:lpi} do not change, while the second term may only increase. 
Hence, for maximization in \eqref{eq:ellr} it is enough to consider only the cases, when the base-$r$ digits of $i$ and $n+1$ satisfy the equalities: $i_s=n_s$ for $s\ge l$ and $i_s=0$ for $s<l$. 

If $\nu_r(i)=0$, then from \eqref{eq:lpi} it follows that
$$-\nu_r(t_i)=\nu_r(n+1)-\nu_r\left(\binom{2n}{n-i}\right)-\nu_r(2i-1)\le -\nu_r(t_{n+1})\le \ell_r.$$ 
If addition of $n+1$ and $n$ in base $r$ does not have a carry in the $l$-th (least significant) position, then 
$-\nu_r(t_{j_{l+1}})\ge -\nu_r(t_{j_l})+1$ since $\nu_r(j_{l+1})=\nu_r(j_l)$ increases. 
If a carry in the $l$-th position happens, it may follow by more carries, i.e., $-\nu_r(t_{j_l})\le -\nu_r(t_{j_{l+m}})$, where $l+m$ is the first position after $l$ with no carry. More precisely, for $s=0,1,\dots,m$, we have 
$-\nu_r(t_{j_{l+s}})=-\nu_r(t_{j_{l+m}})-(m-s)=-\nu_r(t_{j_l})+1-(m-s)$ under the condition that the corresponding base $r$ digits of $n+1$ are nonzero. Hence, the maximum of $-\nu_r(t_i)$ is achieved at $i=j_q$, where $q$ is the largest position with no carry when $n+1$ and $n$ are added in base $r$. It follows that
$$L=\prod_{r\le n+1} r^{\ell_r} = \frac{\lcm(1,2,\dots,2n)\cdot (2n+1)}{\binom{2n+1}{n}}.$$

Since for each prime $r\le n+1$, there exists an index $i$ such that $r\nmid c_i$, the coefficients $c_i=L\cdot t_i$ are integer and setwise coprime.
\end{proof}

\section{Concluding Remarks}

Theorem~\ref{Th1} covers that case of sums of binomial coefficients with upper indices being arbitrary multiples of $p$, 
but with a fixed lower index. 
Our analysis shows that generalizations of Theorem~\ref{Th1} to the case of arbitrary lower indices does not always lead to soluble
linear equations for the coefficients $y_i$, and even if solutions exist they can hardly be expressed explicitly.

We remark that there also exists a generalization of the Jacobsthal congruence to the case of composite modulus proposed by the first author.
Namely, the Jacobsthaln congruence can be expressed as 
\begin{equation}\label{eq:divm}
m^3\mid 6\cdot\sum_{d\mid m}\mu\left(\frac md\right)\binom{ad}{bd},
\end{equation}
where $m=p$ is prime and $\mu(\cdot)$ is the M\"obius function.
It turns out that congruence \eqref{eq:divm} holds also for an arbitrary positive integer $m$. 
This statement follows from the Jacobsthal congruence by considering the right-hand side of \eqref{eq:divm} modulo $p^{3\nu_p(n)}$ for every prime $p\mid m$. 
From \eqref{eq:divm} one can easily obtain a similar congruence:
\begin{equation}\label{eq:divmsign}
m^3\mid 12\cdot\sum_{d\mid m}(-1)^{m+d}\mu\left(\frac md\right)\binom{ad}{bd}.
\end{equation}
We remark that the factor 6 in \eqref{eq:divm} can be replaced with $M(a,b)=\frac{12}{\gcd(12,ab(a-b))}$ (it is easy to see that $M(a,b)\mid 6$), 
while for some $a,b$, the factor can be further decreased down to $\frac{1}{2}M(a,b)$.
Similarly, the factor 12 in \eqref{eq:divmsign} can be replaced with
$$
M'(a,b) = \frac{3}{\gcd(3,ab(a-b))}\cdot 2^\delta,\quad\text{where}\quad 
\delta = \begin{cases}
\min\{1,\nu_2(b)\}, & \text{if}\ \nu_2(a-b)=\nu_2(b),\\
2, & \text{otherwise},
\end{cases}
$$
while for some $a,b$ it can be further decreased down to $\frac{1}{2}M'(a,b)$. 
For example, for $(a,b)=(2,1)$, the quotients corresponding to factors $M(2,1)=6$ and $M'(2,1)=3$ are given by the sequences \texttt{A268592} and \texttt{A254593} in the OEIS~\cite{OEIS}.
Theorem~\ref{Th1} allows one to further generalize congruences \eqref{eq:divm} and \eqref{eq:divmsign} to higher powers of $m$.

\bibliographystyle{plain}
\bibliography{luc_english.bib}

\end{document}